%%%%%%%%%%%%%%%%%%%%%%%%%%%%%%%%%%%%%%%%%%%%%%%%%%%%%%%%%%%%%%%%%%%%%%%%%%%%%%%%%%%%%%%%%%%%%%%%%%%%%%%%%%%%%%%%%%%%%%%%%%
\documentclass{amsart}
\usepackage{amssymb}
%%%%%%%%%%%%%%%%%%%%%%%%%%%%%%%%%%%%%%%%%%%%%%%%%%%%%%%%%%%%%%%%%%%%%%%%%%%%%%%%%%%%%%%%%%%%%%%%%%%%%%%%%%%%%%%%%%%%%%%%%%
\newtheorem{theorem}{Theorem}[]
\newtheorem{lemma}[theorem]{Lemma}
\newtheorem{question}[theorem]{Question}
%%%%%%%%%%%%%%%%%%%%%%%%%%%%%%%%%%%%%%%%%%%%%%%%%%%%%%%%%%%%%%%%%%%%%%%%%%%%%%%%%%%%%%%%%%%%%%%%%%%%%%%%%%%%%%%%%%%%%%%%%%
\newcommand{\e}{\ell^1}
\def\to{\longrightarrow}
\def\l{\lambda}
\def\L{\Lambda}
\def\d{\cdot}
\newcommand{\gd}{\delta}
%%%%%%%%%%%%%%%%%%%%%%%%%%%%%%%%%%%%%%%%%%%%%%%%%%%%%%%%%%%%%%%%%%%%%%%%%%%%%%%%%%%%%%%%%%%%%%%%%%%%%%%%%%%%%%%%%%%%%%%%%%
\begin{document}
\baselineskip14pt
%%%%%%%%%%%%%%%%%%%%%%%%%%%%%%%%%%%%%%%%%%%%%%%%%%%%%%%%%%%%%%%%%%%%%%%%%%%%%%%%%%%%%%%%%%%%%%%%%%%%%%%%%%%%%%%%%%%%%%%%%%
\title[Pseudo-Amenability]{Pseudo-Amenability of Brandt Semigroup Algebras}
%%%%%%%%%%%%%%%%%%%%%%%%%%%%%%%%%%%%%%%%%%%%%%%%%%%%%%%%%%%%%%%%%%%%%%%%%%%%%%%%%%%%%%%%%%%%%%%%%%%%%%%%%%%%%%%%%%%%%%%%%%
\author[M.M. Sadr]{Maysam Maysami Sadr}
\email{sadr@iasbs.ac.ir}
\address{Department of  Mathematics,
Institute for Advanced Studies in Basic Sciences (IASBS),
Zanjan, Iran.}
%%%%%%%%%%%%%%%%%%%%%%%%%%%%%%%%%%%%%%%%%%%%%%%%%%%%%%%%%%%%%%%%%%%%%%%%%%%%%%%%%%%%%%%%%%%%%%%%%%%%%%%%%%%%%%%%%%%%%%%%%%
\subjclass[2000]{43A20, 43A07, 46H20.}
\keywords{Pseudo-amenability, Brandt semigroup algebra, amenable group.}
%%%%%%%%%%%%%%%%%%%%%%%%%%%%%%%%%%%%%%%%%%%%%%%%%%%%%%%%%%%%%%%%%%%%%%%%%%%%%%%%%%%%%%%%%%%%%%%%%%%%%%%%%%%%%%%%%%%%%%%%%%
\begin{abstract}
In this paper it is shown that for a Brandt semigroup $S$ over a
group $G$ with an arbitrary index set $I$, if $G$ is amenable, then the Banach
semigroup algebra $\e(S)$ is pseudo-amenable.
\end{abstract}
%%%%%%%%%%%%%%%%%%%%%%%%%%%%%%%%%%%%%%%%%%%%%%%%%%%%%%%%%%%%%%%%%%%%%%%%%%%%%%%%%%%%%%%%%%%%%%%%%%%%%%%%%%%%%%%%%%%%%%%%%%
\maketitle
%%%%%%%%%%%%%%%%%%%%%%%%%%%%%%%%%%%%%%%%%%%%%%%%%%%%%%%%%%%%%%%%%%%%%%%%%%%%%%%%%%%%%%%%%%%%%%%%%%%%%%%%%%%%%%%%%%%%%%%%%%
\section{Introduction}
The concept of amenability for Banach algebras was introduced by Johnson
in 1972 \cite{J}. Several modifications of this notion, such as approximate amenability
and pseudo-amenability, were introduced in \cite{GL} and \cite{GZ}.
In the current paper we investigate the pseudo-amenability of Brandt semigroup algebras.
It was shown in \cite{GL} and \cite{GZ} that for the group algebra $L^1(G)$,
amenability, approximate amenability and pseudo-amenability coincide and are
equivalent to the amenability of locally compact group $G$. In the semigroup case we know
that, if $S$ is a discrete semigroup, then amenability of $\e(S)$ implies that $S$ is
regular and amenable \cite{DN}. Ghahramani et al \cite{GLZ} have shown that, if $\e(S)$ is
approximately amenable, then $S$ is regular and amenable.
The present author and Pourabbas in \cite{MP} have shown that for a Brandt semigroup
$S$ over a group $G$ with an index set $I$, the following are equivalent.
\begin{enumerate}
\item [(i)]  $\e(S)$ is amenable.
\item [(ii)]  $\e(S)$ is approximately amenable.
\item [(iii)]  $I$ is finite and $G$ is amenable.
\end{enumerate}
This result corrects \cite[Theorem 1.8]{LS}.
In the present paper we show that for a Brandt semigroup $S$ over a group $G$
with an arbitrary (finite or infinite) index set $I$, amenability of $G$ implies pseudo-amenability
of $\e(S)$.

%%%%%%%%%%%%%%%%%%%%%%%%%%%%%%%%%%%%%%%%%%%%%%%%%%%%%%%%%%%%%%%%%%%%%%%%%%%%%%%%%%%%%%%%%%%%%%%%%%%%%%%%%%%%%%%%%%%%%%%%%%
\section{Preliminaries}
Throughout $\hat{\otimes}$ denotes the completed projective
tensor product. For an element $x$ of a set $X$, $\gd_x$ is its
point mass measure in $\e(X)$. Also, we frequently use the identification
$\e(X\times Y)=\e(X)\hat{\otimes}\e(Y)$ for the sets $X$ and $Y$.\\
A Banach algebra $A$ is called (approximately) amenable,
if for any {\it dual} Banach $A$-bimodule
$E$, every bounded {\it derivation} from $A$ to $E$ is
({\it approximately}) {\it inner}.
It is well known that amenability of $A$ is equivalent
to existence of a {\it bonded approximate diagonal}, that
is a bounded net $(m_i)\in A\hat{\otimes}A$ such that
for every $a\in A$, $a\d m_i-m_i\d a\to 0$ and $\pi(m_i)a\to a$,
where $\pi:A\hat{\otimes}A\to A$ is the continuous
bimodule homomorphism defined by $\pi(a\otimes b):=ab$ ($a,b\in A$),
and called the {\it diagonal map}.
The famous Johnson Theorem \cite{J}, says that, for any locally compact
group $G$, amenability of $G$ and $L^1(G)$ are equivalent.
For a modern account on amenability see \cite{R} and for approximate
amenability see the original papers \cite{GL} and \cite{GLZ}.\\
A Banach algebra $A$ is called pseudo-amenable (\cite{GZ})
if there is a net $(n_i)\in A\hat{\otimes}A$,
called an {\it approximate diagonal} for $A$,
such that $a\d n_i-n_i\d a\to 0$ and $\pi(n_i)a\to a$
for each $a\in A$.

Let $I$ be a nonempty set and let $G$ be a discrete group.
Consider the set $T:=I\times G\times I$,
add a null element $\o$ to $T$, and
define a semigroup multiplication  on $S:=T\cup\{\o\}$,
as follows. For
$i,i',j,j'\in I$ and $g,g'\in G$, let
$$(i,g,j)(i',g',j')=\begin{cases}
(i,gg',j') &\hbox{\rm\ if }j=i'\\
\o&\hbox{\rm\ if }j\neq i', \\
\end{cases}$$
also let $\o(i,g,j)=(i,g,j)\o=\o$ and
$\o\o=\o$. Then $S$ becomes a semigroup
that is called Brandt semigroup over $G$
with index $I$, and usually denoted by
$B(I,G)$. For more details see \cite{H}.\\
The Banach space $\e(T)$, with the convolution product,
$$(ab)(i,g,j)=\sum_{k\in I,h\in G}a(i,gh^{-1},k)b(k,h,j),$$
for $a,b\in\e(T), i,j\in I, g\in G$,
becomes a Banach algebra. (Note that if $G$ is the one point group,
and $I$ is finite, then $\e(T)$ is an ordinary matrix algebra.)
We have a closed relation between the Banach algebra
$\e(T)$ and the Banach semigroup algebra $\e(S)$:
\begin{lemma}\label{l1}
There exists an homeomorphic isomorphism $\e(S)\cong\e(T)\oplus\mathbb{C}$
of Banach algebras, where the multiplication of $\e(T)\oplus\mathbb{C}$ is coordinatewise.
\end{lemma}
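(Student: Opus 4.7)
The plan is to construct the isomorphism explicitly using two natural algebra homomorphisms out of $\e(S)$: the projection onto the $T$-part and the augmentation.

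First I would define $\pi:\e(S)\to\e(T)$ by discarding the $\gd_\o$-coordinate, i.e. $\pi(\sum_{s\in S}a_s\gd_s)=\sum_{t\in T}a_t\gd_t$. Because the convolution on $\e(T)$ given in the excerpt is precisely the one obtained from the convolution on $\e(S)$ by throwing away the $\gd_\o$-terms, the map $\pi$ is a contractive algebra homomorphism. Next I would define $\epsilon:\e(S)\to\mathbb{C}$ to be the augmentation $\epsilon(\sum a_s\gd_s)=\sum_{s\in S}a_s$. The key observation, which uses the presence of the zero element $\o$, is that $\epsilon(\gd_s\gd_{s'})=1=\epsilon(\gd_s)\epsilon(\gd_{s'})$ for all $s,s'\in S$: for a product in $T$ this is trivial, and for the cases $(i,g,j)(i',g',j')=\o$ with $j\ne i'$ the value on the left is still $1$ because $\epsilon(\gd_\o)=1$. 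Hence $\epsilon$ is also a contractive algebra homomorphism.

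Then I would combine these into $\Phi:\e(S)\to\e(T)\oplus\mathbb{C}$, $\Phi(a):=(\pi(a),\epsilon(a))$. Since $\pi$ and $\epsilon$ are both algebra homomorphisms and the codomain carries the coordinatewise product, $\Phi$ is an algebra homomorphism. Writing a generic element of $\e(S)$ as $a=b+\l\gd_\o$ with $b\in\e(T)$ and $\l\in\mathbb{C}$, one has $\Phi(b+\l\gd_\o)=(b,\phi(b)+\l)$, where $\phi:\e(T)\to\mathbb{C}$ is the $\ell^1$-augmentation on $T$. This formula makes it transparent that $\Phi$ is a bijection, with inverse $(b,\mu)\mapsto b+(\mu-\phi(b))\gd_\o$. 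Both $\Phi$ and $\Phi^{-1}$ are continuous because $\pi$, $\epsilon$ and $\phi$ are contractive, so $\Phi$ is a topological isomorphism of Banach algebras.

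There is no real obstacle here; the lemma reduces to a one-line verification once the correct pair of homomorphisms is identified. The only point requiring a moment of care is the definition of $\epsilon$: one must assign the value $1$ (not $0$) to $\gd_\o$, which is forced by the fact that $\o$ is an absorbing element, and it is exactly this that makes the second coordinate of the splitting genuinely multiplicative rather than just additive.
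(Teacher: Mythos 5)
Your proof is correct and follows essentially the same route as the paper: the paper splits the short exact sequence $0\to\e(T)\to\e(S)\to\mathbb{C}\to0$ using exactly your two homomorphisms (the restriction to $T$ and the augmentation functional), and your explicit inverse $(b,\mu)\mapsto b+(\mu-\phi(b))\gd_\o$ at $\mu=0$ is precisely the paper's section $\Psi$. Nothing further is needed.
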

\begin{proof}
Consider the following short exact sequence of Banach algebras and
continuous algebra homomorphisms.
$$0\to\e(T)\to\e(S)\to\mathbb{C}\to0,$$
where the second arrow $\Psi:\e(T)\to\e(S)$ is defined by
$\Psi(b)(t):=b(t)$ and $\Psi(b)(\o):=-\sum_{s\in T}b(s)$,
for $b\in\e(T)$ and $t\in T\subset S$,
and the third arrow $\Phi:\e(S)\to\mathbb{C}$ is the
integral functional, $\Phi(a):=\sum_{s\in S}a(s)$ ($a\in\e(S)$).
Now, let $\Theta:\e(S)\to\e(T)$ be the restriction
map, $\Theta(a):=a\mid_T$. Then $\Theta$ is a
continuous  algebra homomorphism and $\Theta\Psi=Id_{\e(T)}$.
Thus the exact sequence splits and we have $\e(S)\cong\e(T)\oplus\mathbb{C}$.
\end{proof}
\begin{lemma}\label{l2}
If $\e(T)$ is pseudo-amenable, then so is $\e(S)$.
\end{lemma}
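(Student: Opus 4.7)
The plan is to combine Lemma~\ref{l1} with a general construction showing that if a Banach algebra $A$ is pseudo-amenable, then so is $A\oplus\mathbb{C}$ with coordinatewise multiplication. Applied with $A=\e(T)$, this immediately yields pseudo-amenability of $\e(S)$.

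To carry this out, I would start from an approximate diagonal $(m_\alpha)\subset \e(T)\hat{\otimes}\e(T)$ for $\e(T)$ and use the canonical decomposition
\[
\bigl(\e(T)\oplus\mathbb{C}\bigr)\hat{\otimes}\bigl(\e(T)\oplus\mathbb{C}\bigr)\;\cong\;\bigl(\e(T)\hat{\otimes}\e(T)\bigr)\oplus\bigl(\e(T)\hat{\otimes}\mathbb{C}\bigr)\oplus\bigl(\mathbb{C}\hat{\otimes}\e(T)\bigr)\oplus\bigl(\mathbb{C}\hat{\otimes}\mathbb{C}\bigr).
\]
Define
\[
n_\alpha \;:=\; m_\alpha \;+\; 1\otimes 1,
\]
where $m_\alpha$ lies in the first summand and $1\otimes 1$ in the last. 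The key observation making the commutator condition work is that $\mathbb{C}\hat{\otimes}\mathbb{C}\cong\mathbb{C}$, so that for any $\l\in\mathbb{C}$ we have $\l\otimes 1=1\otimes\l$ as elements of $\mathbb{C}\hat{\otimes}\mathbb{C}$; the $\mathbb{C}$-component of every commutator therefore vanishes identically.

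For the verification, take an arbitrary $(a,\l)\in\e(T)\oplus\mathbb{C}$. Using coordinatewise multiplication, the left and right actions on $n_\alpha$ decompose according to the four summands above. The $\e(T)\hat{\otimes}\e(T)$ component of $(a,\l)\d n_\alpha-n_\alpha\d(a,\l)$ reduces to $a\d m_\alpha-m_\alpha\d a$, which tends to $0$ by the choice of $(m_\alpha)$; the mixed $\e(T)\hat{\otimes}\mathbb{C}$ and $\mathbb{C}\hat{\otimes}\e(T)$ components are identically zero because $n_\alpha$ has no component there; and the $\mathbb{C}\hat{\otimes}\mathbb{C}$ component vanishes by the observation above. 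For the second diagonal condition, $\pi(n_\alpha)=(\pi(m_\alpha),1)$, so $\pi(n_\alpha)\d(a,\l)=(\pi(m_\alpha)a,\l)\to(a,\l)$.

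I do not expect a real obstacle here; the argument is a routine direct-sum construction, and the only point requiring a little care is the bookkeeping of the four-term tensor product decomposition and the cancellation in the scalar summand. Once these are handled, $(n_\alpha)$ is an approximate diagonal for $\e(T)\oplus\mathbb{C}\cong\e(S)$, which completes the proof.
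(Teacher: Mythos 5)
Your proposal is correct and follows the same route as the paper: reduce via Lemma~\ref{l1} to the algebra $\e(T)\oplus\mathbb{C}$ and use that adjoining a $\mathbb{C}$-summand (coordinatewise) preserves pseudo-amenability. The only difference is that the paper disposes of this last step by citing \cite[Proposition 2.1]{GZ}, whereas you verify it directly by adjoining $1\otimes1$ to the approximate diagonal; your computation is a correct proof of exactly the fact being cited.
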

\begin{proof}
Suppose that $\e(T)$ is pseudo-amenable, then
by Lemma \ref{l1} and \cite[Proposition 2.1]{GZ},
$\e(S)$ is pseudo-amenable.
\end{proof}
%%%%%%%%%%%%%%%%%%%%%%%%%%%%%%%%%%%%%%%%%%%%%%%%%%%%%%%%%%%%%%%%%%%%%%%%%%%%%%%%%%%%%%%%%%%%%%%%%%%%%%%%%%%%%%%%%%%%%%%%%%
\section{The Main Result}
Let $S,T,G$ and $I$, be as above.
We need some other notations and computations:\\
For $a\in\e(T)$ and every $u,v\in I$, let
$a_{(u,v)}$ be an element of $\e(G)$ defined by,
$a_{(u,v)}(g):=a(u,g,v)$ ($g\in G$). Note that
$\|a\|_{\e(T)}=\sum_{u,v\in I}\|a_{(u,v)}\|_{\e(G)}$.\\
For $b\in\e(G\times G)$, $c\in\e(G)$ and any $i,j,i',j'\in I$,
let $E^b_{(i,j,i',j')}$ and $H^c_{(i,j)}$ be elements of $\e(T\times T)$ and $\e(T)$
respectively, defined by,
$$E^b_{(i,j,i',j')}(u,g,v,u',g',v')=\begin{cases}
b(g,g') &\hbox{\rm\ if }u=i,v=j,u'=i',v'=j'\\
0&\hbox{\rm\ otherwise } \\
\end{cases}$$
$$H^c_{(i,j)}(u,g,v)=\begin{cases}
c(g) &\hbox{\rm\ if }u=i,v=j\\
0&\hbox{\rm\ otherwise } \\
\end{cases}$$
where $u,v,u',v'\in I$ and $g,g'\in G$. Also note that,
\begin{equation}\label{e1}
\|E^b_{(i,j,i',j')}\|_{\e(T\times T)}=\|b\|_{\e(G\times G)},\hspace{3mm}
\|H^c_{(i,j)}\|_{\e(T)}=\|c\|_{\e(G)}.
\end{equation}
For $u,v\in I$ and $g\in G$, the module action of $\e(T)$ on
$\e(T\times T)$ becomes,
\begin{equation}\label{e2}
\gd_{(u,g,v)}\d E^b_{(i,j,i',j')}=\begin{cases}
E^{\gd_g\d b}_{(u,j,i',j')} &\hbox{\rm\ if }i=v\\
0&\hbox{\rm\ if }i\neq v, \\
\end{cases}
\end{equation}
\begin{equation}\label{e3}
E^b_{(i,j,i',j')}\d \gd_{(u,g,v)}=\begin{cases}
E^{b\d\gd_g}_{(i,j,i',v)} &\hbox{\rm\ if }j'=u\\
0&\hbox{\rm\ if } j'\neq u.\\
\end{cases}
\end{equation}
For the multiplication of $\e(T)$, we have,
\begin{equation}\label{e4}
\gd_{(u,g,v)}H^c_{(i,j)}=\begin{cases}
H^{\gd_g c}_{(u,j)} &\hbox{\rm\ if }i=v\\
0&\hbox{\rm\ if }i\neq v, \\
\end{cases}\hspace{3mm}
H^c_{(i,j)}\gd_{(u,g,v)}=\begin{cases}
H^{c\gd_g}_{(i,v)} &\hbox{\rm\ if }j=u\\
0&\hbox{\rm\ if }j\neq u. \\
\end{cases}
\end{equation}
And finally, the diagonal maps  $\pi:\e(T\times T)\to\e(T)$ and
$\pi:\e(G\times G)\to\e(G)$ have the relation,
\begin{equation}\label{e5}
\pi(E^b_{(i,j,i',j')})=\begin{cases}
H^{\pi(b)}_{(i,j')} &\hbox{\rm\ if }j=i'\\
0&\hbox{\rm\ if } j\neq i'.\\
\end{cases}
\end{equation}
We are now ready to prove our main result:
\begin{theorem}
Suppose that $G$ is amenable. Then $\e(S)$ is pseudo-amenable.
\end{theorem}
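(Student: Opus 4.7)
The plan is to reduce, via Lemma \ref{l2}, to constructing an approximate diagonal for $\e(T)$, and then to build one by ``spreading'' a bounded approximate diagonal for $\e(G)$---which exists because $G$, and hence $\e(G)$, is amenable---over finite subsets of $I$. Fix a bounded approximate diagonal $(m_\l)_{\l\in\L}\subset\e(G\times G)$ for $\e(G)$ with $\sup_\l\|m_\l\|\le C$, and for each finite $F\subseteq I$ and each $\l\in\L$ define
$$n_{F,\l}:=\frac{1}{|F|}\sum_{i,j\in F}E^{m_\l}_{(i,j,j,i)}\ \in\ \e(T\times T),$$
indexed by the product directed set (inclusion on $F$, the given order on $\L$). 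This $n_{F,\l}$ is the candidate approximate diagonal for $\e(T)$; the particular choice of indices $(i,j,j,i)$ is made so that the $j_1=i_2$ condition in (\ref{e5}) is automatic.

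The verification on a point mass $\gd_{(u,g,v)}$ is then a direct application of (\ref{e2})--(\ref{e5}). Once $u,v\in F$, equations (\ref{e2}) and (\ref{e3}) collapse the double sum to
$$\gd_{(u,g,v)}\d n_{F,\l}-n_{F,\l}\d\gd_{(u,g,v)}=\frac{1}{|F|}\sum_{j\in F}E^{\gd_g\d m_\l-m_\l\d\gd_g}_{(u,j,j,v)},$$
whose norm by (\ref{e1}) is at most $\|\gd_g\d m_\l-m_\l\d\gd_g\|_{\e(G\times G)}$, and this tends to $0$ by the approximate-diagonal property of $(m_\l)$. Similarly, (\ref{e5}) reduces $\pi(n_{F,\l})$ to $\sum_{i\in F}H^{\pi(m_\l)}_{(i,i)}$, and then (\ref{e4}) gives $\pi(n_{F,\l})\d\gd_{(u,g,v)}=H^{\pi(m_\l)\d\gd_g}_{(u,v)}$ whenever $u\in F$; this converges to $H^{\gd_g}_{(u,v)}=\gd_{(u,g,v)}$ because $\pi(m_\l)\d\gd_g\to\gd_g$ in $\e(G)$.

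The main obstacle is that the net $(n_{F,\l})$ is unbounded as $|F|$ grows (its norm is of order $|F|$), so convergence on the dense linear span of point masses does not propagate automatically to all of $\e(T)$. The rescue is the one-sided uniform bound
$$\|\gd_s\d n_{F,\l}\|,\ \|n_{F,\l}\d\gd_s\|,\ \|\pi(n_{F,\l})\d\gd_s\|\le C\qquad(s\in T),$$
which holds for every $F$ and $\l$ because the Kronecker constraints in (\ref{e2})--(\ref{e5}) leave at most $|F|$ surviving terms---weighted by $1/|F|$---in each two-sided product, and only one surviving term in $\pi(n_{F,\l})\d\gd_s$. Given $a\in\e(T)$ and $\epsilon>0$, I would approximate $a$ by its restriction $a|_K$ to a finite $K\subseteq T$ with $\|a-a|_K\|<\epsilon$, pick $F_0$ containing every $I$-coordinate appearing in $K$, and pick $\l_0$ so that $\|\gd_g m_\l-m_\l\gd_g\|$ and $\|\pi(m_\l)\gd_g-\gd_g\|$ are both $<\epsilon$ uniformly for $g$ ranging over the finite set of $G$-coordinates in $K$. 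Then a routine splitting of $a\d n_{F,\l}-n_{F,\l}\d a$ and $\pi(n_{F,\l})\d a-a$ into $K$- and $(T\setminus K)$-contributions---tails controlled by $C$ against $\|a-a|_K\|$, bulk by the $\epsilon$-choice---completes the verification that $(n_{F,\l})$ is an approximate diagonal for $\e(T)$.
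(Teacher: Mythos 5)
Your proposal is correct and follows essentially the same route as the paper: the candidate diagonal $\frac{1}{|F|}\sum_{i,j\in F}E^{m_\l}_{(i,j,j,i)}$ is exactly the paper's $W_{F,\l}$, and your density argument (uniform bound $C$ on $\gd_s\mapsto\gd_s\d n_{F,\l}$ etc., plus truncation of $a$ to a finite set) is just a repackaging of the paper's direct estimate, which truncates $a$ in the $I$-coordinates via $F_0$ and $J_0$. The one point worth making explicit in a write-up is the passage $\e(T)\to\e(S)$ via Lemma \ref{l2}, which you correctly flag at the outset.
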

\begin{proof}
Let $(m_\l)_{\l\in\L}\in\e(G\times G)$ be a bounded approximate
diagonal for the amenable Banach algebra $\e(G)$.
For any finite nonempty subset $F$ of $I$ and $\l\in\L$, let
$$W_{F,\l}:=\frac{1}{\#F}\sum_{i,j\in F}E^{m_\l}_{(i,j,j,i)},$$
where $\#F$ denotes the cardinal of $F$.
We show that the net
$(W_{F,\l})\in\e(T\times T)$,  over the  directed set
$\Gamma\times\L$ where $\Gamma$ is the directed set of finite subsets of $I$
ordered by inclusion,
is an approximate diagonal for $\e(T)$.\\
For any $u,v\in I$ and $g\in G$, by Equations (\ref{e2}) and (\ref{e3}), we have,
\begin{equation*}
\gd_{(u,g,v)}\d W_{F,\l}=\begin{cases}
\frac{1}{\#F}\sum_{j\in F}E^{\gd_g\d m_\l}_{(u,j,j,v)} &\hbox{\rm\ if }v\in F\\
0&\hbox{\rm\ if }v\notin F, \\
\end{cases}
\end{equation*}
\begin{equation*}
W_{F,\l}\d\gd_{(u,g,v)}=\begin{cases}
\frac{1}{\#F}\sum_{j\in F}E^{m_\l\d\gd_g}_{(u,j,j,v)} &\hbox{\rm\ if }u\in F\\
0&\hbox{\rm\ if }u\notin F, \\
\end{cases}
\end{equation*}
and thus,
\begin{equation*}
\gd_{(u,g,v)}\d W_{F,\l}-W_{F,\l}\d\gd_{(u,g,v)}=\begin{cases}
\frac{1}{\#F}\sum_{j\in F}E^{\gd_g\d m_\l-m_\l\d\gd_g}_{(u,j,j,v)} &\hbox{\rm\ if }u\in F,v\in F\\
\frac{1}{\#F}\sum_{j\in F}E^{\gd_g\d m_\l}_{(u,j,j,v)} &\hbox{\rm\ if }v\in F,u\notin F\\
-\frac{1}{\#F}\sum_{j\in F}E^{m_\l\d\gd_g}_{(u,j,j,v)} &\hbox{\rm\ if }u\in F,v\notin F\\
0&\hbox{\rm\ if }v\notin F,u\notin F. \\
\end{cases}
\end{equation*}
Then for $a=\sum_{u,v\in I,g\in G}a(u,g,v)\gd_{(u,g,v)}$ in $\e(T)$, we have,
\begin{equation*}
\begin{split}
a\d W_{F,\l}-W_{F,\l}\d a&=\frac{1}{\#F}\sum_{j,u,v\in F}E^{a_{(u,v)}\d m_\l-m_\l\d a_{(u,v)}}_{(u,j,j,v)}\\
&+\frac{1}{\#F}\sum_{j,v\in F,u\in I-F}E^{a_{(u,v)}\d m_\l}_{(u,j,j,v)}\\
&-\frac{1}{\#F}\sum_{j,u\in F,v\in I-F}E^{m_\l\d a_{(u,v)}}_{(u,j,j,v)},
\end{split}
\end{equation*}
and thus, by (\ref{e1}),
\begin{equation}\label{e9}
\begin{split}
\|a\d W_{F,\l}-W_{F,\l}\d a\|&\leq\sum_{u,v\in F}\|a_{(u,v)}\d m_\l-m_\l\d a_{(u,v)}\|\\
&+\sum_{v\in F,u\in I-F}\|a_{(u,v)}\d m_\l\|\\
&+\sum_{u\in F,v\in I-F}\|m_\l\d a_{(u,v)}\|.
\end{split}
\end{equation}
Now, suppose that $M>0$ is a bound for the norms of $m_\l$'s. Let $\epsilon>0$ be
arbitrary, and let $F_0$ be an element of $\Gamma$ such that,
$$\sum_{(u,v)\in J_0,g\in G}|a(u,g,v)|=\sum_{(u,v)\in J_0}\|a_{(u,v)}\|<\epsilon,$$
where $J_0=(I\times(I-F_0))\cup((I-F_0)\times I)$. And, choose a $\l_0\in\L$ such that for
every $\l\geq\l_0$,
$$\sum_{u,v\in F_0}\|a_{(u,v)}\d m_\l-m_\l\d a_{(u,v)}\|<\epsilon.$$
Now, if $(F,\l)\in\Gamma\times\L$ such that $F_0\subseteq F$, $\l\geq\l_0$, then we have,
\begin{equation*}
\begin{split}
\sum_{u,v\in F}\|a_{(u,v)}\d m_\l-m_\l\d a_{(u,v)}\|&\leq\sum_{u,v\in F_0}\|a_{(u,v)}\d m_\l-m_\l\d a_{(u,v)}\|\\
&+\sum_{(u,v)_\in J_0}\|a_{(u,v)}\d m_\l\|\\
&+\sum_{(u,v)_\in J_0}\|m_\l\d a_{(u,v)}\|\\
&<\epsilon+\epsilon M+\epsilon M,
\end{split}
\end{equation*}
and analogously, $\sum_{v\in F,u\in I-F}\|a_{(u,v)}\d m_\l\|<\epsilon M$ and
$\sum_{u\in F,v\in I-F}\|m_\l\d a_{(u,v)}\|<\epsilon M$. Thus by (\ref{e9}),
we have $\|a\d W_{F,\l}-W_{F,\l}\d a\|<\epsilon+4\epsilon M$.\\
\\Therefore, we proved that $a\d W_{F,\l}-W_{F,\l}\d a\to0$, for every $a\in\e(T)$.

Now, we prove that $\pi(W_{F,\l})a\to a$ for any $a\in\e(T)$.\\
\\By (\ref{e5}), we have
$\pi(W_{F,\l})=\frac{1}{\#F}\sum_{i,j\in F}H^{\pi( m_\l)}_{(i,i)}=\sum_{i\in F}H^{\pi( m_\l)}_{(i,i)}$.
Thus (\ref{e4}) implies that,
$$\pi(W_{F,\l})a=\sum_{i\in F,v\in I}H^{\pi( m_\l)a_{(i,v)}}_{(i,v)},$$
since $a=\sum_{u,v\in I}H^{a_{(u,v)}}_{(u,v)}$. Then we have,
\begin{equation}\label{e10}
\begin{split}
\|\pi(W_{F,\l})a-a\|&\leq\sum_{i\in F,v\in I}\|H^{\pi( m_\l)a_{(i,v)}-a_{(i,v)}}_{(i,v)}\|\\
&+\sum_{v\in I,u\in I-F}\|H^{a_{(u,v)}}_{(u,v)}\|.
\end{split}
\end{equation}
Let $\epsilon>0$ be arbitrary, and let $F_0$ and $J_0$ be as above.
Choose a $\l_1\in\L$ such that for every $\l\geq\l_1$,
$$\sum_{i,j\in F_0}\|\pi(m_\l)a_{(i,j)}-a_{(i,j)}\|<\epsilon.$$
Now, if $(F,\l)\in\Gamma\times\L$ such that $F_0\subseteq F$, $\l\geq\l_1$,
then by (\ref{e1}) we have,
\begin{equation*}
\begin{split}
\sum_{i\in F,v\in I}\|H^{\pi( m_\l)a_{(i,v)}-a_{(i,v)}}_{(i,v)}\|&\leq\sum_{i,j\in F_0}\|\pi(m_\l)a_{(i,j)}-a_{(i,j)}\|\\
&+\sum_{(u,v)\in J_0}\|\pi(m_\l)a_{(u,v)}\|+\sum_{(u,v)\in J_0}\|a_{(u,v)}\|\\
&<\epsilon+\epsilon M+\epsilon,
\end{split}
\end{equation*}
and, $\sum_{v\in I,u\in I-F}\|H^{a_{(u,v)}}_{(u,v)}\|=\sum_{v\in I,u\in I-F}\|a_{(u,v)}\|<\epsilon$.
Thus by (\ref{e10}), we have,
$$\|\pi(W_{F,\l})a-a\|<3\epsilon+\epsilon M.$$
This completes the proof.
\end{proof}
We end with a natural question:
\begin{question}
Dose pseudo-amenability of $\e(B(I,G))$ imply amenability of $G$?
\end{question}

%%%%%%%%%%%%%%%%%%%%%%%%%%%%%%%%%%%%%%%%%%%%%%%%%%%%%%%%%%%%%%%%%%%%%%%%%%%%%%%%%%%%%%%%%%%%%%%%%%%%%%%%%%%%%%%%%%%

\end{document}